\newtheorem{theorem}{Theorem}
\newtheorem{prop}[theorem]{Proposition}
\theoremstyle{definition}
\numberwithin{equation}{section}
\DeclareMathOperator{\Spin}{Spin}
\DeclareMathOperator{\SSpin}{\mathbf{Spin}}
\DeclareMathOperator{\SO}{SO}
\DeclareMathOperator{\Aut}{Aut}
\DeclareMathOperator{\Isom}{Isom}
\newcommand{\Z}{\mathbb Z}
\newcommand{\Q}{\mathbb Q}
\newcommand{\R}{\mathbb R}
\newcommand{\A}{\mathbb A}
\newcommand{\Af}{\A_\mathrm{f}}
\newcommand{\V}{V}
\newcommand{\Vf}{\V_{\mathrm{f}}}
\newcommand{\tA}{\mathrm{A}}
\newcommand{\tD}{\mathrm{D}}
\newcommand{\Hy}{\mathbf H}
\DeclareMathOperator{\vol}{vol}
\newcommand{\G}{\mathrm G}
\newcommand{\CG}{\mathrm C}
\newcommand{\D}{\mathscr D}
\newcommand{\AL}{\mathbf A}
\newcommand{\LL}{\mathbf L}
\newcommand{\bs}{\backslash}
\newcommand{\zero}{\xymatrix{ *={\bullet} \ar@{-}[r]^5&
				*={\bullet} \ar@{-}[r] & *={\bullet} \ar@{-}[r] & *={\bullet}
				\ar@{-}[r]  & *={\bullet} \ar@{-}[r] &
				*={\bullet} \ar@{--}[r] & *={\bullet}
			}}
\newcommand{\one}{\xymatrix@R=5pt@C=17.5pt{& & & & &  *={\bullet} \ar@{--}[dd] \\
				*={\bullet} \ar@{-}[r]^5 & *={\bullet} \ar@{-}[r] & *={\bullet}
				\ar@{-}[r]  & *={\bullet} \ar@{-}[r] &
				*={\bullet} \ar@{-}[ur] \ar@{-}[dr] & \\
				 & & & & & *={\bullet} 
			}}
\newcommand{\two}{\xymatrix{ *={\bullet} \ar@{-}[r]^5&
				*={\bullet} \ar@{-}[r] & *={\bullet} \ar@{-}[r] & *={\bullet}
				\ar@{-}[r]  & *={\bullet} \ar@{-}[r]^4 &
				*={\bullet} \ar@{--}[r] & *={\bullet}
			}}
\begin{document}
\title{The three smallest compact arithmetic hyperbolic $5$-orbifolds}

\author{Vincent Emery and Ruth Kellerhals}
\thanks{Kellerhals partially supported by the Swiss National Science Foundation, project no. 200020-131967}
\address{
Max Planck Institute for Mathematics\\
Vivatsgasse 7\\
53111 Bonn\\
Germany\\
\newline
Department of Mathematics\\
University of Fribourg\\
Chemin du Mus\'ee 23\\
1700 Fribourg\\
Switzerland
}
\email{vincent.emery@gmail.com
\\
ruth.kellerhals@unifr.ch}

\date{\today}

\subjclass[2010]{22E40 (primary); 11R42, 20F55, 51M25  (secondary)}

\maketitle

\section{Introduction}

Let $\Isom(\Hy^5)$ be the group of isometries of
the hyperbolic space $\Hy^5$ of dimension five, and $\Isom^+(\Hy^5)$
its index two subgroup of orientation-preserving isometries.
In~\cite{BelEme} (see also~\cite{EmePhD}) the lattice of
smallest covolume among cocompact arithmetic lattices of $\Isom^+(\Hy^5)$
was determined. This lattice was constructed as
the image of an arithmetic subgroup $\Gamma_0$ of the spinor group $\Spin(1,5)$ (note
that $\Spin(1,n)$ is a twofold covering of $\SO(1,n)^\circ \cong
\Isom^+(\Hy^n)$).
More precisely, $\Gamma_0$ is given by the normalizer
in $\Spin(1,5)$ of a certain arithmetic group $\Lambda_0
\subset \G_0(k_0)$, where $k_0=\Q(\sqrt{5})$ and $\G_0$ is the algebraic
$k_0$-group $\SSpin(f_0)$ defined by the quadratic form
\begin{align}
	f_0(x) &=  - (3+2\sqrt{5}) x_0^2 + x_1^2 + \cdots + x_5^2	\, .
	\label{eq:def-f0}
\end{align}
In~\cite{BelEme} the index $[\Gamma_0: \Lambda_0]$ was computed to be
equal to $2$. We note that it is easily checked that $\Lambda_0$
contains the center of $\Spin(1,5)$, so that the covolume of the action
of $\Lambda_0$ on $\Hy^5$ is the double of the covolume of $\Gamma_0$.

In this article we construct a cocompact arithmetic lattice $\Gamma_2
\subset \Spin(1,5)$ of covolume slightly bigger than the covolume of
$\Lambda_0$, and we prove that it realizes the third smallest covolume
among cocompact arithmetic lattices in $\Spin(1,5)$. In other words, we
obtain the second and third values in the volume spectrum of compact
orientable arithmetic hyperbolic $5$-orbifolds, thus improving the
results of~\cite{BelEme,EmePhD} for this dimension. For notational reasons
we put $\Gamma_1 = \Lambda_0$. Moreover, for $i= 0,1,2$,
we denote by $\Gamma_i'$ the image of $\Gamma_i$ in $\Isom^+(\Hy^5)$.

\begin{theorem}
	\label{thm:vol}
	The lattices $\Gamma'_0, \Gamma'_1$ and  $\Gamma'_2$ (ordered by increasing covolume)
	are the three cocompact arithmetic lattices in $\Isom^+(\Hy^5)$ of minimal
	covolume. They are unique, in the sense that any cocompact
	arithmetic lattice in $\Isom^+(\Hy^5)$ of covolume smaller than or
	equal to $\Gamma_2'$ is conjugate in $\Isom(\Hy^5)$ to
	one of the $\Gamma_i'$.
\end{theorem}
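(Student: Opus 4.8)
The plan is to reduce the theorem to a finite computation by means of Prasad's volume formula, following the strategy developed by Belolipetsky and refined in~\cite{BelEme}. Recall first that every cocompact arithmetic lattice of $\Isom^+(\Hy^5) \cong \SO(1,5)^\circ$ is, up to conjugacy, commensurable with the image of $\G(k)$ for some admissible, absolutely almost simple and simply connected $k$-group $\G$ of type $\tD_3$ over a totally real field $k$, where $\G$ becomes $\Spin(1,5)$ at one archimedean place and compact at all the others. Such a $\G$ is either the spinor group $\SSpin(f)$ of a quadratic form $f$ in six variables (an inner or an outer form, the latter carrying a quadratic discriminant extension $\ell/k$), or a quaternionic special unitary group; cocompactness forces $\G$ to be anisotropic over $k$, which already rules out $k = \Q$. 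By Borel's description of maximal arithmetic subgroups, it suffices to minimize the covolume over the normalizers of the principal arithmetic subgroups $\Lambda$ attached to coherent collections of parahorics, so the problem becomes one of bounding finitely many arithmetic invariants.

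The first main step is to write down, for each commensurability class, an explicit lower bound for $\covol(\Lambda)$. Prasad's formula expresses this covolume as a product of an archimedean factor, the discriminant contributions (a fixed power of $|d_k|$, together with a power of $|d_\ell/d_k|$ in the outer case), a special value of the relevant Dedekind or Artin $L$-function, and a finite product of local lambda-factors, each $\geq 1$, coming from the places where $\G$ fails to be quasi-split. The passage from $\Lambda$ to a maximal lattice, and then to its normalizer, multiplies the covolume by an index which Borel--Prasad bound in terms of a class-number cohomological factor; combining these ingredients yields a lower bound for every lattice in the class that is monotone increasing in $|d_k|$ and in the ramification data of $\ell$ and of the quaternion algebra.

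The second step is to show that these bounds, together with the functional equation, Brauer--Siegel type lower estimates for the $L$-values, and Odlyzko's discriminant bounds, force the condition $\covol \leq \covol(\Gamma_2')$ to select only a short explicit list of cases: the field $k$ must be $\Q(\sqrt 5)$ (the real quadratic field of smallest discriminant) or one of finitely many further small quadratic fields eliminated by direct comparison, the discriminant $d_\ell$ of any outer-form extension is tightly constrained, and the quaternion algebra together with the set of non-quasi-split places must be minimal. For each of the surviving classes I would then evaluate the minimal covolume exactly and order the resulting values, so as to identify the three smallest with $\covol(\Gamma_0') < \covol(\Gamma_1') < \covol(\Gamma_2')$. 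Uniqueness up to conjugacy in $\Isom(\Hy^5)$ follows because distinct covolume values separate the commensurability classes, while inside each relevant class the minimal lattice is unique by Borel's description of the normalizer once the (trivial or explicitly computed) class number is known.

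I expect the main obstacle to be the sharp evaluation of the local lambda-factors and of the principal-to-maximal indices at the dyadic and ramified primes of $\Q(\sqrt 5)$: these computations are what must be pushed just far enough to separate the second and third smallest covolumes from the fourth, and any looseness there would enlarge the finite list beyond what can reasonably be handled case by case.
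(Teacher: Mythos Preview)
Your outline is essentially the paper's strategy: reduce to maximal arithmetic subgroups, apply Prasad's formula with the bounds of \cite{BelEme}, cut down to a finite list of pairs $(k,\ell)$, and then compare exactly. Two points deserve sharpening. First, admissibility forces $\G$ to be of type $^2\tA_3$ with a genuine quadratic splitting field $\ell/k$, so there is no separate ``inner'' or ``quaternionic SU'' branch to treat; everything is $\SSpin(f)$ for a six-variable form. Second, and more substantively, the base field $k$ is \emph{not} a priori quadratic: the discriminant bounds leave nineteen totally real fields $k$ of degrees $2$ through $5$, and for each one the admissible $\ell$ with $\D_\ell$ below the relevant threshold must be enumerated via class field theory (the paper does this in \textsc{Pari/GP}). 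After that sieve, two pairs survive the coarse bounds, namely $(\D_k,\D_\ell)=(8,448)$ and $(5,475)$, and each needs an exact covolume computation (including the index $[\Gamma:\Lambda]$ at the ramified place) to be excluded. The local issue you flag as the main obstacle is in fact easy here: any non-special parahoric contributes a lambda factor $\ge 18$, which immediately disposes of all non-special choices once the special case is in hand.
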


The precise formulas for the hyperbolic covolumes of these lattices are
given below in Proposition \ref{prop:index-2}. We list in Table~\ref{tab:covol} the corresponding numerical
approximations.

\begin{table}
	\centering
	\begin{tabular}{cr}
		Lattice &  Hyperbolic covolume \\[3pt]
		\hline\\[-5pt]
		$\Gamma'_0$  & 0.00153459236\dots \\[3pt]
		$\Gamma'_1$  & 0.00306918472\dots \\[3pt]
		$\Gamma'_2$  & 0.00396939286\dots \\[5pt]
	\end{tabular}
	\end{table}
\begin{table}
	\centering
	\begin{tabular}{clr}
		Coxeter group & Coxeter symbol & Hyperbolic covolume \\[3pt]
		\hline\\[-5pt]
		$\Delta_0$  & $[5,3,3,3,3]$ & $0.00076729618\dots$ \\[3pt]
	         $\Delta_1$  & $[5,3,3,3,3^{1,1}]$ & $0.00153459235\dots$ \\[3pt]
		$\Delta_2$  & $[5,3,3,3,4]$ & $0.00198469643\dots$ \\[10pt]
	\end{tabular}		
	\caption{Approximation of hyperbolic covolumes}
	\label{tab:covol}
\end{table}

A central motivation for Theorem \ref{thm:vol} is that the lattices
$\Gamma'_0$, $\Gamma'_1$ and $\Gamma'_2$  can be
related to concrete geometric objects. Namely, let $P_0$ and $P_2$ be the two
compact Coxeter polytopes in $\Hy^5$ described by the following Coxeter
diagrams, of respective Coxeter symbols $[5,3,3,3,3]$ and $[5,3,3,3,4]$
(see \S 4):
\begin{align}
	P_0 &: \quad \zero
	\label{eq:Cox-0}
\end{align}
\begin{align}
	P_2 &: \quad \two
	\label{eq:Cox-2}
\end{align}
These two polytopes were first discovered by Makarov \cite{Mak68} (see
also Im Hof \cite{ImH90}) (see  \S\ref{sec:proof-2}). Combinatorially, they are simplicial prisms. Let $P_1=DP_0$ be the geometric double of $P_0$ with respect to its Coxeter facet
$\,[5,3,3,3]$. It follows that the Coxeter polytope $P_1$ can be
characterized by the following Coxeter diagram, of symbol $[5,3,3,3,3^{1,1}]$:
\begin{align}
	P_1 &: \quad \vcenter{\one}
	\label{eq:Cox-1}
\end{align}
We denote by $\Delta_i \subset \Isom(\Hy^5)$ the Coxeter
group generated by the reflections through the hyperplanes delimiting
$P_i$ ($0\le i\le 2$). It is known, by Vinberg's criterion \cite{Vinb67},
that the lattices $\Delta_0$ (thus $\Delta_1$ as well) and $\Delta_2$
are arithmetic.
\begin{theorem}
	For $i=0,1,2,$ let $\Delta_i^+$ be the lattice $\Delta_i \cap
	\Isom^+(\Hy^5)$, which is of index two in $\Delta_i$. Then
	$\Delta^+_i$ is conjugate to $\Gamma_i'$ in $\Isom(\Hy^5)$.
	In particular, $\Delta_0$ realizes the smallest covolume 
	among the cocompact arithmetic lattices in $\Isom(\Hy^5)$.
	\label{thm:Cox}
\end{theorem}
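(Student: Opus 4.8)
The plan is to show that each Coxeter group $\Delta_i$ arises from the same arithmetic data as $\Gamma_i$, and then match covolumes to pin down the correspondence. Since both $\Delta_i^+$ and $\Gamma_i'$ are lattices in $\mathrm{Isom}^+(\mathbf{H}^5)$, and Theorem~\ref{thm:vol} has already classified the three smallest-covolume cocompact arithmetic lattices there, the cleanest route is to prove that each $\Delta_i^+$ is arithmetic of covolume equal to one of the three minimal values, so that by the uniqueness part of Theorem~\ref{thm:vol} it must be conjugate to the corresponding $\Gamma_i'$. The covolumes of $\Gamma_i'$ are recorded in Proposition~\ref{prop:index-2}, and the covolumes of the $\Delta_i$ are classically known (e.g.\ via the Coxeter-symbol data in Table~\ref{tab:covol}); the key numerical check is then that $\mathrm{covol}(\Delta_i^+) = 2\,\mathrm{covol}(\Delta_i) = \mathrm{covol}(\Gamma_i')$.

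So the first step is to identify the arithmetic invariants of $\Delta_0$ and $\Delta_2$. By Vinberg's criterion these groups are arithmetic, defined over the field $k_0 = \mathbb{Q}(\sqrt5)$ (the presence of the label $5$ in each Coxeter diagram forces $\cos^2(\pi/5) \in k_0$, hence $k_0$ as invariant field). I would extract from the Gram matrix of each polytope the associated quadratic form over $k_0$ and check that it is $\mathrm{Isom}(\mathbf{H}^5)$-equivalent, up to similarity, to the form $f_0$ of~\eqref{eq:def-f0} governing $\G_0 = \SSpin(f_0)$ — this is what places $\Delta_0$, $\Delta_2$ in the same commensurability class as the $\Gamma_i$. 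For $\Delta_1$, the doubling construction $P_1 = DP_0$ shows $\Delta_1$ is commensurable with $\Delta_0$ with $[\Delta_0 : \Delta_1] = 2$, so $\mathrm{covol}(\Delta_1) = 2\,\mathrm{covol}(\Delta_0)$, consistent with the table.

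Next I would compute the covolumes. The hyperbolic volumes of the Coxeter prisms $P_0$ and $P_2$ can be obtained from the work of Makarov and Im~Hof, or recomputed directly from the Coxeter symbols; these give the values $0.00076\ldots$ and $0.00198\ldots$ in Table~\ref{tab:covol}. Matching $2\,\mathrm{covol}(\Delta_0) = 0.00153\ldots = \mathrm{covol}(\Gamma_0')$, $2\,\mathrm{covol}(\Delta_1) = 0.00306\ldots = \mathrm{covol}(\Gamma_1')$, and $2\,\mathrm{covol}(\Delta_2) = 0.00396\ldots = \mathrm{covol}(\Gamma_2')$ against the first table then forces, via the uniqueness in Theorem~\ref{thm:vol}, the conjugacy $\Delta_i^+ \sim \Gamma_i'$ in $\mathrm{Isom}(\mathbf{H}^5)$ for each $i$.

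\emph{The main obstacle} I anticipate is verifying that the $\Delta_i$ genuinely lie in the \emph{same} commensurability class as $\G_0$, rather than merely having coincidentally equal covolumes: two cocompact arithmetic lattices with identical covolume need not be conjugate unless one already knows they are commensurable, and the uniqueness statement of Theorem~\ref{thm:vol} is exactly the tool that lets me bypass a direct conjugacy construction — \emph{provided} $\Delta_i^+$ is already known to be arithmetic and cocompact (which it is, being generated by reflections in a compact polytope and arithmetic by Vinberg). Thus the real content is (i) correctly computing $\mathrm{covol}(\Delta_i)$ from the polytope geometry, and (ii) confirming arithmeticity and cocompactness so that Theorem~\ref{thm:vol} applies; once these are in hand, the conjugacy is immediate. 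The final clause — that $\Delta_0$ realizes the smallest covolume in $\mathrm{Isom}(\mathbf{H}^5)$, not just in $\mathrm{Isom}^+(\mathbf{H}^5)$ — follows because $\Delta_0$ contains orientation-reversing reflections and has covolume exactly half that of $\Delta_0^+ \sim \Gamma_0'$, the minimal orientation-preserving lattice.
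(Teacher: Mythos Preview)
Your overall strategy --- compute $\vol(P_i)$, note that the $\Delta_i$ are arithmetic and cocompact, then invoke the uniqueness in Proposition~\ref{prop:smallest-three} (which underlies Theorem~\ref{thm:vol}) to force $\Delta_i^+ \sim \Gamma_i'$ --- is exactly the paper's approach. Two remarks, however. First, your claim that the Gram matrix of $P_2$ yields a form similar to $f_0$ is incorrect: $\Delta_2$ arises from the form $f_2$ of~\eqref{eq:def-f2}, whose splitting field $\ell_2$ differs from $\ell_0$, so $\Delta_2$ is \emph{not} commensurable with $\Delta_0$ or $\Gamma_0'$; fortunately, as you yourself observe, this commensurability check is unnecessary once the uniqueness statement is available. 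Second, the volume computation is less routine than you suggest: Makarov and Im~Hof do not supply these volumes, and the paper obtains $\vol_5(P_0)$ and $\vol_5(P_2)$ by embedding them in a one-parameter family $P(\alpha) = [5,3,3,3,\alpha]$, using a scissors-congruence identity to pin down the exact reference value $\vol_5(P(2\pi/5)) = \zeta(3)/3200$, and then numerically integrating Schl\"afli's volume differential (with a Lobachevsky-function expression for the $3$-dimensional orthoscheme face) down to $\alpha = \pi/3$ and $\alpha = \pi/4$.
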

The proof of Theorem \ref{thm:Cox} is obtained as a consequence of
Theorem~\ref{thm:vol} (more exactly from the slightly more precise
Proposition \ref{prop:smallest-three}) together with an geometric/analytic
computation of the volumes $\vol(P_0)$ and $\vol(P_2)$ that will be
presented in
\S\ref{sec:proof-2}. We note that the fact that $\Delta_2$ and
$\Gamma_2'$ are commensurable lattices follows from the work of Bugaenko
\cite{Buga84} where $\Delta_2$ is constructed by applying Vinberg's algorithm
on the same quadratic form~\eqref{eq:def-f2} which we will use below to
construct $\Gamma_2$. No arithmetic construction of $\Delta_0$ and
$\Delta_1$ was known so far.
 
The approximations of the volumes of $P_0$, $P_1$ and $P_2$ are listed in
Table~\ref{tab:covol}. These volumes can be obtained by two completely
different approaches: from the method given in \S \ref{sec:proof-2}, or
from the covolumes of the arithmetic lattices $\Gamma_i$, which are
essentially computed with
Prasad's volume formula \cite{Pra89}.  The comparison of these two
approaches has some arithmetic significance that will be briefly
discussed in \S \ref{sec:rmks-vol}.

\subsection*{Acknowledgements}

We would like to thank Herbert Gangl for interesting discussions
concerning \S \ref{sec:rmks-vol}. We thank the Institut Mittag-Leffler
in Stockholm, where this paper was completed. The first named author is
thankful to the MPIM in Bonn for the hospitality and the financial
support.

\section{Construction and properties of $\Gamma_2$}
\label{sec:Gamma2}

We call an algebraic group \emph{admissible} if it gives rise to
cocompact lattices in $\Spin(1,5)$; see \cite[\S 2.2]{BelEme} for the exact
definition. We say that an admissible $k$-group $\G$ is
\emph{associated with} $k/\ell$, where $\ell$ is the smallest field extension
of $k$ (necessarily quadratic) such that $\G$ is an inner form over
$\ell$, sometimes called ``splitting field'' of $\G$. We use the same
terminology for the arithmetic subgroups of $\G$.
Admissibility imposes that $\G$ is of type $^2\tA_3$, the field
$k$ is totally real, and $\ell$ has signature $(2,d-1)$ where $d =
[k:\Q]$ (cf. \cite[Prop. 2.5]{BelEme}). Note that since we consider only
cocompact lattices in this article, we assume that $k \neq \Q$.
In the following, the symbol $\Vf$ will always refer to the set of
finite places of the base field $k$ (and not of $\ell$).

Let $\G_2$ be the algebraic spinor group $\SSpin(f_2)$ defined over $k_0
= \Q(\sqrt{5})$, where $f_2$ is the following quadratic form:
\begin{align}
	f_2(x) &=   - \omega x_0^2 + x_1^2 + \cdots + x_5^2	\, ,
	\label{eq:def-f2}
\end{align}
with $\omega = \frac{1 + \sqrt{5}}{2}$. We have $\G_2(\R) \cong
\Spin(1,5)
\times \Spin(6)$, proving that $\G_2$ is admissible. Its ``splitting
field'' is given by (cf. \cite[\S 3.2]{BelEme}):
\begin{align}
	\label{eq:ell1}
	\ell_2 &= \Q(\sqrt{\omega})\\
	\nonumber	&\cong \Q[x]/(x^4 - x^2 -1) \, ,
\end{align}
which has a discriminant of absolute value $\D_{\ell_2} = 400$.
The following proposition shows
an analogy between $\G_2$ and $\G_0$ (cf. \cite[Prop. 3.6]{BelEme}).

\begin{prop}
	\label{prop:G2-qs}
	The group $\G_2$ is quasisplit at every finite
	place $v$ of $k_0$. It is the unique admissible group associated
	with $k_0/\ell_2$ with this property.
\end{prop}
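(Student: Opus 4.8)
The plan is to follow the template of \cite[Prop. 3.6]{BelEme}: quasisplitness is a local condition, so I would verify it one finite place at a time, and then obtain uniqueness from the Hasse principle. First observe that $\G_2 = \SSpin(f_2)$ is of type $\tD_3 = \tA_3$ and that the quadratic extension attached to it is the discriminant extension of $f_2$; since the signed discriminant of $f_2$ equals $\omega$ modulo squares, one has $k_0(\sqrt{\omega}) = \ell_2$, confirming that $\G_2$ is indeed associated with $k_0/\ell_2$. A short Hilbert symbol computation shows that the Hasse--Witt invariant of $f_2 = \langle -\omega, 1, 1, 1, 1, 1 \rangle$ is trivial at every place $v$, because each symbol $(a_i, a_j)_v$ with $i<j$ involves the entry $1$. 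Now fix $v \in \Vf$. If $v$ does not split in $\ell_2$, then $\omega$ is not a square in $(k_0)_v$, so $f_2$ has nontrivial discriminant there; the anisotropic part of $f_2$ over $(k_0)_v$ has even dimension and cannot be zero (the discriminant being nontrivial) nor four (a four-dimensional anisotropic form over a non-archimedean local field has trivial discriminant), hence it is a binary form and $f_2$ has Witt index $2$, which is exactly the quasisplit index for type $^2\tD_3$ associated with $\ell_2$. If instead $v$ splits in $\ell_2$, then $\G_2 \otimes (k_0)_v$ is of inner type $^1\tD_3$, for which quasisplit means split; here $\omega$ becomes a square, so $f_2$ acquires trivial discriminant over $(k_0)_v$, and together with the vanishing Hasse--Witt invariant this forces $f_2$ to be hyperbolic over $(k_0)_v$. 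In both cases $\G_2$ is quasisplit at $v$.

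For the uniqueness, let $\G$ be any admissible group associated with $k_0/\ell_2$ that is quasisplit at every finite place, and write $\G = \SSpin(f)$ for a six-dimensional form $f$ over $k_0$. Since $\G$ and $\G_2$ are associated with the same extension $\ell_2/k_0$, the forms $f$ and $f_2$ have the same discriminant up to squares, so they lie in the same inner class. Within a fixed inner class over any field the quasisplit form is unique up to isomorphism; hence for every $v \in \Vf$ the groups $\G \otimes (k_0)_v$ and $\G_2 \otimes (k_0)_v$ are both quasisplit of the same local type and therefore isomorphic, equivalently $f$ and $f_2$ are similar over $(k_0)_v$. At the two real places admissibility prescribes the isomorphism type of $\G$, which coincides with that of $\G_2$, namely $\Spin(1,5) \times \Spin(6)$; so $f$ and $f_2$ are similar over $\R$ at each archimedean place as well. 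Thus $f$ and $f_2$ are similar over every completion of $k_0$, and by the Hasse principle for the similarity of quadratic forms over a number field (equivalently, by Hasse--Minkowski applied to the discriminant, the Clifford invariant, and the real signatures, all of which are similarity invariants) the forms $f$ and $f_2$ are similar over $k_0$. Since similar forms yield isomorphic spinor groups in dimension six, we conclude $\G \cong \G_2$ over $k_0$.

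The main obstacle lies in making the uniqueness argument precise at the places $v$ that are inert or ramified in $\ell_2$. There the six-dimensional local form with discriminant $\omega$ splits into two distinct isometry classes, separated by the Hasse--Witt invariant, yet both have Witt index $2$; one must check that these define the \emph{same} local spinor group, so that ``quasisplit'' singles out an unambiguous local similarity class. This reduces to the standard fact that two anisotropic binary forms with equal discriminant are similar, which identifies the anisotropic kernel in every case with the norm form of $\ell_2 \otimes (k_0)_v$ and thereby confirms that the two isometry classes are similar. Once this local point is settled, the global conclusion follows formally from the Hasse principle, exactly as in the proof of \cite[Prop. 3.6]{BelEme}.
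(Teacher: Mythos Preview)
Your proof is correct and follows the same template as the paper's: check quasisplitness locally via the Hasse invariant of $f_2$, then deduce uniqueness from Hasse--Minkowski. The paper is terser---it uses that $\omega$ is a \emph{unit} in $\mathcal{O}_{k_0}$ to match the Hasse symbol with that of the split form at nondyadic places and then defers to the structure theory in \cite[\S 3.2]{BelEme}, while for uniqueness it simply invokes \cite[Lemma~3.4]{BelEme}---whereas you observe more directly that five of the six diagonal entries of $f_2$ equal $1$ (so every Hilbert symbol is trivial at every place), carry out the local Witt-index analysis by hand, and unpack the similarity argument explicitly; your final paragraph correctly isolates the one genuine subtlety (two local isometry classes with the given discriminant, but a single similarity class) that the paper's cited lemma absorbs.
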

\begin{proof}
	Since $\omega$ is an integer unit in $k_0$ it easily
	follows that for at each nondyadic place $v \neq (2)$ the form $f_2$ has the
	same Hasse symbol as the standard split form 
	of signature $(3,3)$. From the structure theory of
	$\SSpin$ described in \cite[\S 3.2]{BelEme} we conclude that
	$\G_2$ must be quasisplit at every finite place $v$ (note that at the
	place $v = (2)$, which is ramified in $\ell_2/k_0$, the group $\G_2$
	is necessarily an outer form). Similarly to the proof of
	\cite[Prop. 3.6]{BelEme}, the second affirmation follows from
	\cite[Lemma 3.4]{BelEme} together with the Hasse-Minkowski theorem.
\end{proof}

We write here $k = k_0$. By Proposition \ref{prop:G2-qs} we see that for every finite place $v
\in \Vf$ there exists a special parahoric subgroup $P_v \subset \G_2(k_v)$. 
More precisely, $P_v$ is hyperspecial unless $v$ is the dyadic place $(2)$ (the particularity of
$v=(2)$ comes from the fact that this place is ramified in the extension
$\ell_2/k_0$).
The collection $(P_v)_{v \in \Vf}$ of special parahoric
subgroups can be chosen to be coherent, i.e., such that $\prod_{v} P_v$
is open in the group $\G_2(\Af)$ of finite adelic points. We now consider the principal
arithmetic subgroup associated with such a coherent collection:
\begin{align}
	\Lambda_2 &= \G_2(k_0) \cap \prod_{v \in \Vf} P_v \, .
	\label{eq:def-Lbda2}
\end{align}
The covolume of $\Lambda_2$ can be computed with Prasad's volume
formula \cite{Pra89}. If $\mu$ denotes the Haar measure on $\Spin(1,5)$
normalized as in \cite{BelEme} (which corresponds to the measure
$\mu_\mathrm{S}$ in \cite{Pra89}), then we obtain:
\begin{align}
	\mu(\Lambda_2 \bs \Spin(1,5) ) &= 	\D_{k_0}^{15/2}
	\D_{\ell_2}^{5/2} C^2 \zeta_{k_0}(2) \zeta_{k_0}(4) L_{\ell_2/k_0}(3) 
	\, , \label{eq:mu-Lbda2}
\end{align}
where $C = 3\cdot2^{-7} \pi^{-9}$, the symbol $\zeta_k$ denotes the Dedekind zeta function
associated with $k$, and $L_{\ell/k} = \zeta_{\ell} /
\zeta_{k}$ is the $L$-function corresponding to a quadratic extension
$\ell/k$.

We can now construct the group $\Gamma_2$ and compute its
hyperbolic covolume. In the same proposition we recall
the value of the hyperbolic covolume of $\Gamma_0$, which was obtained
in \cite{BelEme}.

\begin{prop}
	\label{prop:index-2}
	Let $\Gamma_2$ be the normalizer of $\Lambda_2$ in $\Spin(1,5)$.
	Then $\Lambda_2$ has index two in $\Gamma_2$. It follows that
	the hyperbolic covolume of $\Gamma'_2$ is equal to 
	\begin{align}
		\frac{9 \sqrt{5}^{15}}{2^3 \pi^{15}}
	\zeta_{k_0}(2) \zeta_{k_0}(4) L_{\ell_2/k_0}(3)
	&= 0.00396939286\dots\, .
		\label{eq:vol-Gam2}
	\end{align}
	The hyperbolic covolume of $\Gamma'_0$ is equal to 	
	\begin{align}
		\frac{9 \sqrt{5}^{15} \sqrt{11}^5}{2^{14} \pi^{15}}
	\zeta_{k_0}(2) \zeta_{k_0}(4) L_{\ell_0/k_0}(3)
	&= 0.00153459236\dots\, ,
		\label{eq:vol-Gam0}
	\end{align}
	where $\ell_0$ is the quartic field with $x^4 - x^3 + 3x -1$ as
	defining polynomial.
\end{prop}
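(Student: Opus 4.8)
The plan is to establish the index $[\Gamma_2 : \Lambda_2] = 2$ first, and then to read off the covolume of $\Gamma_2'$ from the Prasad volume \eqref{eq:mu-Lbda2} of $\Lambda_2$. For the index I would follow the strategy used in \cite{BelEme} for $\Gamma_0$, exploiting the analogy between $\G_2$ and $\G_0$ afforded by Proposition~\ref{prop:G2-qs}. The normalizer $\Gamma_2 = N_{\Spin(1,5)}(\Lambda_2)$ lies in the commensurator of $\Lambda_2$, hence consists of $k_0$-points of the adjoint group $\overline{\G}_2$ stabilizing the coherent collection $(P_v)_{v\in\Vf}$; the quotient $\Gamma_2/\Lambda_2$ is therefore controlled by the Galois-cohomological exact sequence attached to $1 \to Z \to \G_2 \to \overline{\G}_2 \to 1$, where $Z$ is the ($\ell_2/k_0$-twisted $\mu_4$) center of the simply connected group of type $^2\tA_3$. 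Concretely, $[\Gamma_2 : \Lambda_2]$ can be written as a product of local indices $n_v$ --- each recording the type-rotations of the local (affine) Dynkin diagram realized by the normalizer of $P_v$ --- times a global term built from $H^1(k_0, Z)$ and its image in the idele class group of $\ell_2$.

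I would treat the local factors first. By Proposition~\ref{prop:G2-qs} the group $\G_2$ is quasisplit at every finite place, and $P_v$ is hyperspecial except at the dyadic place $(2)$, which is ramified in $\ell_2/k_0$. At the hyperspecial places the local contribution is the one dictated by the $^2\tA_3$ local index, and these combine exactly as in \cite{BelEme}; the only place needing separate attention is $(2)$, where $\G_2$ is an outer form and $P_v$ is special but not hyperspecial. There one must compute the normalizer of this single special parahoric in $\G_2(k_{0,(2)})$ and determine its type-rotation contribution explicitly.

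The global term is where the main difficulty lies. It requires the class number of the quartic field $\ell_2 = \Q(\sqrt{\omega})$ together with the kernel/cokernel of the norm and restriction maps that govern the image of $H^1(k_0, Z)$ in the class group. This is the step I expect to be the principal obstacle: one must verify that the global obstruction, combined with the local factors above, makes the product collapse to \emph{exactly} $2$ rather than a larger power of $2$ --- precisely the phenomenon established for $\Gamma_0$ in \cite{BelEme} --- and I would check that the arithmetic of $\ell_2$ (with $\D_{\ell_2} = 400$ and its known class number) reproduces that outcome.

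Granting $[\Gamma_2 : \Lambda_2] = 2$, the covolume formula is a routine substitution. Since the index is $2$ we have $\mu(\Gamma_2 \bs \Spin(1,5)) = \tfrac{1}{2}\,\mu(\Lambda_2 \bs \Spin(1,5))$, and the passage from the Haar measure $\mu$ on $\Spin(1,5)$ to the hyperbolic covolume of $\Gamma_2'$ in $\Isom^+(\Hy^5)$ is effected by the normalization constant fixed in \cite{BelEme}; as with $\Lambda_0$, one checks that $\Lambda_2$ contains the center $\{\pm 1\}$ of $\Spin(1,5)$, so the covering $\Spin(1,5) \to \Isom^+(\Hy^5)$ introduces no further factor. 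Inserting $\D_{k_0} = 5$, $\D_{\ell_2} = 400$, and $C = 3\cdot 2^{-7}\pi^{-9}$ into \eqref{eq:mu-Lbda2} and simplifying $5^{15/2}\cdot 400^{5/2}\cdot C^2$ then yields \eqref{eq:vol-Gam2}. The stated value \eqref{eq:vol-Gam0} for $\Gamma_0'$ is simply recalled from \cite{BelEme} and requires no new argument.
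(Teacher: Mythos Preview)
Your strategy is the same as the paper's: both invoke the machinery of \cite[\S 4]{BelEme} to identify $[\Gamma_2:\Lambda_2]$ with the order of a finite group coming from $H^1(k_0,\CG)$, and both then plug the index into \eqref{eq:mu-Lbda2} together with the normalization factor $2\pi^3$ from \cite[\S 2.1]{BelEme}. Where you diverge from the paper is in emphasis and in the one concrete step that actually has to be carried out.

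You describe the index as ``local factors times a global term'' and flag the global term (class number of $\ell_2$, norm maps, etc.) as the principal obstacle. In the paper's setup that global piece is already done: one has $[\Gamma_2:\Lambda_2]=|A_\xi|$ with $A_\xi$ a subgroup of index at most two in $\AL_4/(\ell_2^\times)^4$, and the order of $\AL_4/(\ell_2^\times)^4$ was computed to be $4$ in \cite[\S 7.5]{BelEme}. No new class-number or unit-group computation is required. The real work is purely local at the ramified dyadic place $v=(2)$: one must decide whether $A_\xi$ is all of $\AL_4/(\ell_2^\times)^4$ or a proper subgroup, and this comes down to exhibiting an element of $\AL_4$ that acts nontrivially on the local Dynkin diagram $\Delta_v$. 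The paper does this explicitly, using \cite[\S 4.2]{MohamSG}: taking the uniformizer $\pi_w = 1+\omega+\sqrt{\omega}$ at the place $w\mid v$ of $\ell_2$, the positive unit $s=\pi_w\overline{\pi_w}^{-1}$ generates $\Aut(\Delta_v)$ and lies in $\AL_4$, forcing $[A_\xi:\AL_4/(\ell_2^\times)^4]=2$ and hence $|A_\xi|=2$. Your proposal correctly singles out $(2)$ as the place needing attention but stops short of this construction; without it, one cannot rule out $|A_\xi|=4$.
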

\begin{proof}
	The relation between the measure $\mu$ and the hyperbolic volume
	is described in \cite[\S 2.1]{BelEme}, where it is proved that
	in dimension $5$ the hyperbolic covolume corresponds to the
	covolume with respect to $2 \pi^3 \times \mu$. Thus 
	it remains to prove that $[\Gamma_2:\Lambda_2] = 2$.
	Let $k=k_0$.

	It follows from the theory developed in \cite[\S 4]{BelEme}
	that the index $[\Gamma_2:\Lambda_2]$ is equal to the order of
	the group denoted by $A_\xi$ in {\em loc.\,cit.}, which can be
	identified as a subgroup of index at most two in $\AL_4/(\ell_2^\times)^4$, where 
	\begin{align}
		\AL &= \left\{ \left.  x \in \ell_2^\times \; \right| \; N_{\ell_2/k}(x) \in
		(k^\times)^4 \mbox{ and } x>0  \right\} \; ;\\
		\label{eq:def-AL}
		\AL_4 &= \left\{ \left. x \in \AL \; \right| \; 
		\nu(x) \in 4\Z \mbox{ for each normalized valuation }\nu
		\mbox{ of } \ell_2 \right\}.
	\end{align}
	Note that in particular, for the integers $q$ and $q'$
	introduced in \cite[\S 4.9]{BelEme}, we have $q = \overline{q} =
	1$. Moreover, if $v=(2)$ denotes the (unique) ramified place of $\ell_2/k$,
	the subgroup $A_\xi$ is proper of index two in $\AL_4/(\ell_2^\times)^4$ if
	and only if there exists an element of $\AL_4$ acting nontrivially
	on the local Dynkin diagram $\Delta_v$ of $\G_2( k_v)$.
	The action of $\AL$ on $\Delta_v$ comes from
	its identification as a subgroup of the first Galois cohomology
	group $H^1(k, \CG)$ (where $\CG$ is the center of $\G_2$),
	which acts on every local Dynkin diagram associated
	with $\G_2$. Since $\G_2$ is of type $\tA$, we can use the
	results of \cite[\S 4.2]{MohamSG}, which show that if
	$\pi_w \in \ell_2$ is a uniformizer for the ramified place $w|v$
	of $\ell_2$, then $s = \pi_w \overline{\pi_w}^{-1}$ is a
	generator of the group $\Aut(\Delta_v)$. Taking $\pi_w
	= 1 + \omega + \sqrt{\omega}$, we obtain a positive unit $s$
	acting nontrivially on $\Delta_v$. Thus, $A_\xi$ has index
	two in $\AL_4/(\ell_2^\times)^4$. But the order of this latter
	group was computed in \cite[\S 7.5]{BelEme} to be equal to $4$.
	This gives $[\Gamma_2:\Lambda_2] = 2$.
\end{proof}

The ``uniqueness'' part of Theorem~\ref{thm:vol} requires the following
result.
\begin{prop}
	\label{prop:uniq-G2}
	Up to conjugacy, the image of $\Gamma_2$ in $\Isom(\Hy^5)$ does
	not depend on the choice of a coherent collection of
	special parahoric subgroups $P_v \subset \G_2(k_v)$.
\end{prop}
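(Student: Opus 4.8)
The plan is to produce a single element $g$ of the adjoint group $\overline{\G}_2(k_0)$ whose action on $\Hy^5$ (through the map $\overline{\G}_2(\R)\to\Isom(\Hy^5)$ afforded by the real place carrying $\Spin(1,5)$) conjugates the image of $\Gamma_2$ onto the image of the normalizer $\widetilde{\Gamma}_2$ attached to a second coherent collection $(\widetilde{P}_v)$. Conjugation by $\overline{\G}_2(k_0)$ acts on $\Spin(1,5)$ by automorphisms and descends to genuine isometries of $\Hy^5$, so it suffices to show that the principal arithmetic subgroups $\Lambda_2$ and $\widetilde{\Lambda}_2$ are conjugate under $\overline{\G}_2(k_0)$; conjugacy of their normalizers, and hence of their images in $\Isom(\Hy^5)$, then follows. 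The ingredients are strong approximation for the simply connected group $\G_2$, the cohomological action of $H^1(k_0,\CG)$ on collections of parahorics, and the explicit generator $s$ already exhibited in the proof of Proposition~\ref{prop:index-2}.

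First I would dispose of the places carrying no genuine ambiguity. By Proposition~\ref{prop:G2-qs} the group $\G_2$ is quasisplit at every $v\in\Vf$, and $P_v$ is hyperspecial for all $v\neq(2)$. At such $v$ all special parahoric subgroups form a single orbit under $\overline{\G}_2(k_v)$, so the only genuine choice occurs at the dyadic place, where the local Dynkin diagram $\Delta_{(2)}$ has exactly two special vertices. It is therefore enough to treat two cases. If $(\widetilde{P}_v)$ uses the same special vertex as $(P_v)$ at $(2)$, the two collections are everywhere locally conjugate; strong approximation for $\G_2$ (valid since $\Spin(1,5)$ is noncompact) upgrades local $\G_2(k_v)$-conjugacy to global $\G_2(k_0)$-conjugacy, and the finiteness input $q=\overline{q}=1$ recorded in the proof of Proposition~\ref{prop:index-2} ensures that no residual class-group obstruction forces a second $\overline{\G}_2(k_0)$-orbit. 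In this case the images coincide up to conjugacy.

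It remains to treat the case where the two collections use the two \emph{different} special vertices at $(2)$. This is exactly the configuration analyzed in the proof of Proposition~\ref{prop:index-2}: the two vertices are interchanged by the generator of $\Aut(\Delta_{(2)})$, which is realized by the positive unit $s=\pi_w\overline{\pi_w}^{-1}$ with $\pi_w=1+\omega+\sqrt{\omega}$. Since $N_{\ell_2/k_0}(s)=1$ and $s$ is a unit, $s$ represents a class of $\AL_4/(\ell_2^\times)^4$, viewed inside $H^1(k_0,\CG)$, which acts nontrivially on $\Delta_{(2)}$ and trivially on every other local Dynkin diagram. Through the identification of this group with adjoint points, $s$ is realized by a global element $g\in\overline{\G}_2(k_0)$, so that $g\,\Lambda_2\,g^{-1}=\widetilde{\Lambda}_2$; as $g$ descends to an isometry of $\Hy^5$, the images of $\Gamma_2$ and $\widetilde{\Gamma}_2$ are conjugate in $\Isom(\Hy^5)$, completing the argument.

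I expect the main obstacle to be the dyadic place $(2)$, in two respects. One must verify that away from $(2)$ the special parahorics really do constitute a single $\overline{\G}_2(k_v)$-orbit (including at the inert unramified places, via the local Dynkin diagram analysis), so that no hidden conjugacy class of collections is overlooked; and, more essentially, one must make precise the dictionary of the proof of Proposition~\ref{prop:index-2} by which the diagram automorphism $s$ lifts to a global adjoint element inducing an isometry of $\Hy^5$. This second point is the heart of the matter: it rests on the embedding of $\AL_4/(\ell_2^\times)^4$ in $H^1(k_0,\CG)$ and on the fact that classes of norm one and positive sign are realized by honest isometries, which is precisely the bookkeeping already governing the normalizer index $[\Gamma_2:\Lambda_2]$.
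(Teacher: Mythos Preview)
Your strategy is different from the paper's, which simply invokes the framework of \cite[\S 6]{BelEme} and reduces everything to the numerical check that $|\LL/\AL| = |U_\LL/U_\AL| = 2$, where $\LL$ is $\AL$ with the positivity condition dropped and $U_\LL,U_\AL$ are the respective intersections with the unit group of $\ell_2$. That check is precisely the bookkeeping that absorbs the ``residual class-group obstruction'' you allude to; your argument never performs it.

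The gap is in your treatment of the case where the two collections pick the \emph{same} special vertex at $(2)$. You claim that strong approximation for $\G_2$ upgrades local conjugacy to global conjugacy. But hyperspecial parahorics at an unramified place $v$ form a single orbit only under $\overline{\G}_2(k_v)$, not under $\G_2(k_v)$: in the local Dynkin diagram of the quasi-split $^2\tA_3$ form there are several hyperspecial vertices, and the simply connected group does not permute them. So two coherent collections that are ``of the same type'' everywhere need not be $\G_2(k_0)$-conjugate, and strong approximation says nothing here. The obstruction to finding a single element of $\overline{\G}_2(k_0)$ (or more generally an isometry of $\Hy^5$) realizing all the required local swaps is a genuine global invariant; the quantities $q=\overline q=1$ you cite are local volume factors attached to non-special parahorics and are unrelated to it. The relevant input is exactly the equality $|\LL/\AL| = |U_\LL/U_\AL|$ verified in the paper: the nontrivial class in $\LL/\AL$ corresponds to the sign freedom afforded by passing from $\Isom^+(\Hy^5)$ to $\Isom(\Hy^5)$, and the fact that it is represented by a \emph{unit} is what guarantees that this extra isometry stabilizes every hyperspecial type and hence kills the obstruction.

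A related issue arises in your second case. Saying that $s\in\AL_4/(\ell_2^\times)^4\subset H^1(k_0,\CG)$ is ``realized by a global element $g\in\overline{\G}_2(k_0)$'' requires that $s$ lies in the image of $\overline{\G}_2(k_0)\to H^1(k_0,\CG)$, i.e.\ that its image in $H^1(k_0,\G_2)$ is trivial; and even once $g$ exists, $g$ will in general move the hyperspecial parahorics at the unramified places to other hyperspecial parahorics of the same type but not equal to the $\widetilde P_v$, bringing you right back to the unresolved obstruction of the first case. Both difficulties dissolve once the $\LL/\AL$ versus $U_\LL/U_\AL$ computation is in place, but without it the argument is incomplete.
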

\begin{proof}
	To prove this we can follow the same line of arguments as in
	\cite[\S 6]{BelEme}, where the result is proved for $\Gamma_0
	\subset \G_0$ (our situation corresponding to the case of the type
	$^2\tD_{2m+1}$). Thus, using \cite[\S 6.5]{BelEme}, the result 
	follows by checking that $\LL/\AL$ and $U_\LL/U_\AL$ have the
	same order (equal to 2), where 
	\begin{align}
		\LL &= \left\{ \left.  x \in \ell_2^\times \; \right| \;
		N_{\ell_2/k_0}(x) \in (k_0^\times)^4 \right\}
		\label{def-LL}
	\end{align}
	and $U_\LL$ (resp. $U_\AL$) is the intersection of $\LL$ (resp.
	$\AL$) with the integers units in $\ell_2$.
\end{proof}

\section{Proof of Theorem \ref{thm:vol}}
\label{sec:proof-1}

In view of Proposition~\ref{prop:index-2}, Theorem~\ref{thm:vol} is a
direct consequence of the following statement.

\begin{prop}
	\label{prop:smallest-three}
	Let $\Gamma' \subset \Isom^+(\Hy^5)$ be a cocompact arithmetic lattice that is
	not conjugate to $\Gamma'_0$, $\Gamma'_1$ or $\Gamma'_2$. Then
	$\vol(\Gamma'\bs \Hy^5) > 4 \cdot 10^{-3}$.
\end{prop}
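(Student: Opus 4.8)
The plan is to bound from below the covolume of an arbitrary cocompact arithmetic lattice $\Gamma' \subset \Isom^+(\Hy^5)$ and to show that, outside finitely many explicitly enumerable cases, this lower bound already exceeds the threshold $4\cdot 10^{-3}$. Since every arithmetic lattice is contained in a maximal one of no larger covolume, and since (by the results recalled in \cite[\S 4]{BelEme}) a maximal such lattice is the normalizer $\Gamma$ of a principal arithmetic subgroup $\Lambda \subset \G(k)$ for some admissible $k$-group $\G = \SSpin(f)$ associated with a pair $k/\ell$, it suffices to bound $\mu(\Lambda \bs \Spin(1,5))$ from below and to control the index $[\Gamma:\Lambda]$, which is uniformly bounded by the class-number and Galois-cohomological data (the order of $A_\xi$ and the group $\AL_4/(\ell^\times)^4$). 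The index can lower the covolume only by a bounded factor, which will be absorbed into the final estimates.

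First I would write down the general form of Prasad's volume formula \cite{Pra89} for these groups, in analogy with \eqref{eq:mu-Lbda2}: with $d = [k:\Q]$ one obtains an expression of the shape $\D_k^{15/2}\,\D_{\ell}^{5/2}\,C^d\,\zeta_k(2)\zeta_k(4)L_{\ell/k}(3)$ times a product of local factors $\lambda_v \geq 1$ coming from the finitely many places where $P_v$ fails to be hyperspecial. The zeta and $L$-values are bounded below by absolute constants via their Euler products, so the dominant behaviour is governed by the discriminants $\D_k$, $\D_\ell$ and by the exponential factor $C^d$.

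Next I would invoke the Odlyzko lower bounds for root discriminants, applied both to the totally real field $k$ and to the field $\ell$ of signature $(2,d-1)$ and degree $2d$: these yield $\D_k \geq \rho_1^{\,d}$ and $\D_\ell \geq \rho_2^{\,2d}$ for explicit constants $\rho_i > 1$. Substituting into the formula shows that the covolume grows like $B^d$ for a base $B>1$, hence tends to infinity with $d$, confining us to bounded degree $d \leq d_0$. For each such $d$ the same inequalities force $\D_k$ and $\D_\ell$ below explicit bounds, leaving only finitely many admissible pairs $(k,\ell)$, to be read off from tables of number fields of small discriminant.

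The main obstacle is the finite residual analysis, concentrated at $d=2$, where the three extremal lattices live over $k = \Q(\sqrt 5)$. For each surviving pair $(k,\ell)$ I would compute the principal-arithmetic covolume, maximize the possible index $[\Gamma:\Lambda]$ using the bounds above and the local factors $\lambda_v$ (exactly as in Proposition \ref{prop:index-2}), and thereby obtain a sharp lower bound for every maximal lattice in the corresponding commensurability class. Verifying that all these bounds exceed $4\cdot 10^{-3}$ except precisely for the classes of $\Gamma_0, \Gamma_1, \Gamma_2$ — in particular ruling out the small-discriminant competitors over $\Q(\sqrt 5)$ itself, where the dyadic and ramified-place contributions must be handled with care — is the delicate part of the argument.
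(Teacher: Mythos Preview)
Your outline is correct and matches the paper's strategy almost exactly: reduce to maximal arithmetic $\Gamma = N(\Lambda)$, apply Prasad's formula, use Odlyzko-type discriminant bounds (packaged in \cite[\S 7]{BelEme} as inequalities (31), (34), (35), (37)) to cut down to finitely many pairs $(k,\ell)$, and finish by case-by-case computation. Two refinements worth noting: before passing to maximal lattices the paper first disposes of proper subgroups of $\Gamma_0,\Gamma_1,\Gamma_2$ (otherwise the reduction is circular, since the maximal overgroup of such a $\Gamma'$ could be $\Gamma_0'$ itself), and the residual enumeration---carried out in \textsc{Pari/GP} via \texttt{bnrinit}/\texttt{rnfkummer}---leaves exactly two stubborn pairs $(\D_k,\D_\ell)=(8,448)$ and $(5,475)$, each requiring a direct evaluation of the right-hand side of Prasad's formula (the latter handled in a separate proposition).
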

\begin{proof}
Let $\Gamma \subset \Spin(1,5)$ be the full inverse image of $\Gamma'$.
We suppose that $\Gamma$ is an arithmetic subgroup of the group
$\G$ defined, associated with  $\ell/k$.
From the values given in \eqref{eq:vol-Gam2} and \eqref{eq:vol-Gam0}, it
is clear that if $\Gamma$ is a proper subgroup of $\Gamma_0$,
$\Gamma_1$ or $\Gamma_2$, then $\vol(\Gamma' \bs \Hy^5) > 4 \cdot
10^{-3}$. Thus it suffices to prove the result assuming that $\Gamma$ is a maximal arithmetic
subgroup with respect to inclusion. In particular, $\Gamma$ can be
written as the normalizer of the principal arithmetic subgroup $\Lambda$
associated with some coherent collection $P = (P_v)$ of parahoric
subgroups $P_v \subset \G(k_v)$.

First we suppose that $k = k_0$, and $\ell = \ell_0$ or $\ell_2$. By
Proposition \ref{prop:G2-qs} and its analogue for $\G_0$, if $\G$ is not
isomorphic to $\G_0$ or $\G_2$ then at least one $P_v$ is not special.
In particular, a ``lambda factor'' $\lambda_v \ge 18$ appears in the
volume formula of $\Lambda$ \cite[\S 7.1]{BelEme}. Together with
\cite[(15)]{BelEme} (note that we do not assume here that $\Gamma =
\Gamma^\mathfrak{m}$, in the notation of {\em loc.\,cit.}) this shows that the covolume of $\Gamma$ is at least
$9$ times the covolume of $\Gamma_0$. Now if $\G$ is isomorphic to
$\G_0$ or $\G_2$, Proposition \ref{prop:uniq-G2} and its analogue for
$\G_0$ show that at least one $P_v$ is not special, and the same
argument as above applies. 

Now we consider the situation $(k,\ell) \neq (k_0,\ell_0)$ nor
$(k_0,\ell_2)$. We will use the different lower bounds for the covolume
of $\Gamma$ given in \cite[\S 7]{BelEme}. Note that in our case the rank
$r$ of $\G$ is equal to $3$. The notations are the following: $d$ is the degree
of $k$, $\D_k$ and $\D_\ell$ are the discriminants of $k$ and $\ell$ in
absolute value, and $h_\ell$ is the class number of $\ell$. Moreover, we
set $a = 3^3 2^{-4} \pi^{-11}$.  From \cite[(37)]{BelEme} we have for
$d \ge 7$ the following lower bound, which proves the result in this
case (recall that the hyperbolic volume corresponds to $2\pi^3 \times \mu$,
where $\mu$ is the Haar measure used by Prasad).
\begin{align}
	\vol(\Gamma'\bs \Hy^5) &> \frac{2 \pi^3}{32} \left( 9.3^{5.5}
	\cdot a \right)^7 \label{eq:37} \\
	&= 7.657\ldots \nonumber
\end{align}

The following bound corresponds to \cite[(35)]{BelEme}.
\begin{align}
	\vol(\Gamma'\bs \Hy^5) &> \frac{2 \pi^3}{32} \D_k^{5.5} a^d
	\label{eq:35} 
\end{align}
For each degree $d=2, \dots, 6$ we can use \eqref{eq:35} to prove
the result for a discriminant $\D_k$ high enough (e.g., $\D_k \ge 27$
for $d=2$). This leave us with a finite number of possible fields $k$ to
examine. From these bounds on $\D_k$ and the tables of number fields
(such as \cite{Bordeaux_data} and \cite{Qaos}) we obtain a list of nineteen fields $k$ (none of
degree $d=6$) that remain to check.

Let us further consider the two following bounds, corresponding to
\cite[(34) and (31)]{BelEme}. See \eqref{eq:mu-Lbda2} for the value of
the symbol $C$.
\begin{align}
	\vol(\Gamma'\bs \Hy^5) &> \frac{2 \pi^3}{32} \D_k^{2.5}
	\D_\ell^{1.5} a^d\;;
	\label{eq:34} \\
	\vol(\Gamma'\bs \Hy^5) &> \frac{2\pi^3}{h_\ell 2^{d+1}} \D_k^{7.5}
	(\D_\ell/\D_k^2)^{2.5} C^d \;.
	\label{eq:31} 
\end{align}
For each of the nineteen fields $k$ we easily obtain an upper bound
$b_k$ for $\D_\ell$ for which the right hand side of \eqref{eq:34} is at most
$4 \cdot 10^{-3}$. Thus we only need to analyse the fields $\ell$ with
$\D_\ell \le b_k$. Let us fix a field $k$. The computational method
described \cite{CohDiaOl98}, based on class field theory, allows to
determine all the quadratic extensions $\ell/k$ with $\D_\ell \le b_k$
and with $\ell$ of right signature, that is, $(2,d-1)$ (cf. \cite[Prop.
2.5]{BelEme}). More precisely, we obtain this list of $\ell/k$ by programming a
procedure in Pari/GP that uses the built-in functions \texttt{bnrinit}
and \texttt{rnfkummer}. For each pair $(k,\ell)$ obtained,
\textsc{Pari/GP} gives us the class
number $h_\ell$ (checking its correctness with \texttt{bnfcertify}) and
this information makes \eqref{eq:31} usable. The inequality 
$\vol(\Gamma'\bs\Hy^5) > 4 \cdot 10^{-3}$ follows then for all the
remaining $(k,\ell)$ except for  the two situations:
\begin{align}
	(\D_k,\D_\ell) &= (8, 448)\; , \label{eq:l-448} \\
	(\D_k,\D_\ell) &= (5, 475)\;.  \label{eq:l-475} 
\end{align}

The case \eqref{eq:l-475} follows from Proposition \ref{prop:cas-475} below.
Let us then consider the case associated with \eqref{eq:l-448}. The
smallest possible covolume of a maximal arithmetic subgroup
$\Gamma = N_{\Spin(1,5)}(\Lambda)$ associated with $\ell/k$ would be in
the situation when all parahoric subgroup $P_v$ determining $\Lambda$
are special. In this case, by \cite[Prop. 4.12]{BelEme} the index
$[\Gamma:\Lambda]$ is bounded by $8$, and together with the precise covolume of
$\Lambda$ by Prasad's formula, we obtain (using \textsc{Pari/GP} to evaluate the zeta
functions):
\begin{align}
	\vol(\Gamma'\bs \Hy^5) &\ge \frac{2\pi^3}{8} \D_k^{7.5}
	(\D_\ell/\D_k^2)^{2.5} C^2  \zeta_k(2) \zeta_k(4)
	\zeta_\ell(3)/\zeta_k(3)  \label{eq:vol-448} \\
	&= 0.004997\dots \nonumber
\end{align}
This concludes the proof.
\end{proof}

\begin{prop}
	\label{prop:cas-475}
	Let $\ell$ be the quadratic extension of $k_0 = \Q(\sqrt{5})$ with discriminant
	of absolute value $\D_\ell = 475$. There exists a cocompact
	arithmetic lattice in $\Isom^+(\Hy^5)$ associated with $\ell/k_0$
	whose approximate hyperbolic covolume is $0.006094\dots$. This is
	the smallest covolume among arithmetic lattices in
	$\Isom^+(\Hy^5)$ associated with $\ell/k_0$.
\end{prop}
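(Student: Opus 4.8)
The plan is to identify the field $\ell$ explicitly and then to repeat, with $\ell$ in place of $\ell_2$, the computation carried out for $\G_2$ in Propositions~\ref{prop:G2-qs} and~\ref{prop:index-2}. Since $\D_\ell = 475 = 5^2 \cdot 19$ and $\D_{k_0} = 5$, the relative discriminant $\mathfrak{d}_{\ell/k_0}$ has norm $19$; as $19 \equiv -1 \pmod 5$ the rational prime $19$ splits in $k_0$, so $\mathfrak{d}_{\ell/k_0}$ is one of the two primes of norm $19$ and $\ell/k_0$ is ramified at a single nondyadic place $v$ (and, $19$ being odd, unramified at the dyadic place). The field $\ell$ is the one already singled out in \eqref{eq:l-475}; I would record its defining polynomial, class number $h_\ell$ and unit group using Pari/GP.

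First I would establish the exact analogue of Proposition~\ref{prop:G2-qs}: among the admissible groups associated with $\ell/k_0$ there is a unique one, say $\G$, that is quasisplit at every finite place (at the ramified place $v$ it is necessarily an outer form, so its special parahoric there is not hyperspecial). As in the proof of that proposition, this follows from the structure theory of $\SSpin$ in \cite[\S 3.2]{BelEme} together with \cite[Lemma 3.4]{BelEme} and the Hasse--Minkowski theorem. For a coherent collection of special parahorics this $\G$ yields a principal arithmetic subgroup $\Lambda$ whose covolume is given, exactly as in \eqref{eq:mu-Lbda2}, by
\[
	\mu(\Lambda \bs \Spin(1,5)) = \D_{k_0}^{15/2} \D_\ell^{5/2} C^2 \zeta_{k_0}(2)\zeta_{k_0}(4) L_{\ell/k_0}(3).
\]

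Next I would compute the index $[\Gamma:\Lambda]$ of $\Lambda$ in its normalizer $\Gamma = N_{\Spin(1,5)}(\Lambda)$ by following the proof of Proposition~\ref{prop:index-2}: determine the order of $\AL_4/(\ell^\times)^4$ from the class-number and unit data, and settle the index-at-most-two refinement $A_\xi$ from the action of $\AL$ on the local Dynkin diagram at $v$, using a uniformizer above $v$ and the criterion of \cite[\S 4.2]{MohamSG}. Combined with the relation $\vol = 2\pi^3 \times \mu$ and a Pari/GP evaluation of the zeta and $L$-values, this produces the hyperbolic covolume of $\Gamma'$, which I expect to equal $0.006094\dots$; the growth relative to $\Gamma'_2$ is governed by the factor $(\D_\ell/\D_{\ell_2})^{5/2}$ together with the ratio of the two $L$-values.

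Finally, for the minimality statement I would argue that this configuration realizes the least covolume among all maximal lattices associated with $\ell/k_0$. The quasisplit group $\G$ with all parahorics special minimizes the numerator $\mu(\Lambda)$, because any other admissible group associated with $\ell/k_0$, or any non-special parahoric choice, forces a factor $\lambda_v \ge 18$ in Prasad's formula; against the universal bound $[\Gamma:\Lambda] \le 8$ of \cite[Prop. 4.12]{BelEme} this factor cannot be offset by enlarging the index, so no competing maximal lattice has smaller covolume. Since every arithmetic lattice associated with $\ell/k_0$ lies in such a maximal one, its covolume is at least $0.006094\dots$. I expect the main difficulty to be the index computation of the third step: obtaining $[\Gamma:\Lambda]$ exactly requires the class-field-theoretic analysis of $A_\xi$ and of the local Dynkin diagram action at $v$, into which the explicit arithmetic of $\ell$ enters, just as this was the crux of Proposition~\ref{prop:index-2}.
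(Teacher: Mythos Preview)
Your proposal is correct and follows essentially the same route as the paper: the paper makes $\G$ explicit as $\SSpin(f)$ for $f(x)=-\beta x_0^2+x_1^2+\cdots+x_5^2$ with $\beta=-1+2\sqrt{5}$, verifies quasisplitness at every finite place, applies Prasad's formula, and computes $[\Gamma:\Lambda]=2$ exactly as you outline (using the uniformizer $\pi_w=(\sqrt{\beta}+\beta)/2$ and the fact from \cite[\S 7.5]{BelEme} that $\AL_4/(\ell^\times)^4$ has order~$4$). For the minimality the paper is terser than you: it simply observes that $\Lambda$ is of the form $\Lambda^{\mathfrak m}$ and cites \cite[\S 4.3]{BelEme}, whereas your $\lambda_v\ge 18$ versus ``universal bound $\le 8$'' argument needs a small fix---the bound of \cite[Prop.~4.12]{BelEme} is stated for the all-special case, so to compare against non-special collections you should invoke \cite[(15)]{BelEme} directly, as is done in the proof of Proposition~\ref{prop:smallest-three}.
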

\begin{proof}
	Let $k=k_0$.
	The field $\ell$ can be concretely described as $\ell =
	k(\sqrt{\beta})$, where $\beta = -1 + 2\sqrt{5}$ (this is a
	divisor of 19).	We consider the algebraic group $\G =
	\mathbf{Spin}(f)$ defined over $k=k_0$, with 
	\begin{align}
		f(x) &=  - \beta x_0^2 + x_1^2 + \cdots + x_5^2	\, .
	\label{eq:f-475}
	\end{align}
	Similarly  to \cite[Prop. 3.6]{BelEme}, we have that $\G$ is
	quasisplit at every finite place $v \in \Vf$ (note that the
	proof for the unique dyadic place can be simplified in {\em
	loc.\,cit.} by noting $2$ is inert in $\ell$ and thus, $\G$
	must be an outer form, necessarily quasisplit, cf. \cite[\S
	3.2]{BelEme}). It follows that there exist a coherent collection
	of special parahoric subgroups $P_v \subset \G(k_v)$, and by
	Prasad's volume formula the hyperbolic covolume of an associated
	principal arithmetic subgroup $\Lambda$ is given by
	\begin{align}
	\vol(\Lambda\bs \Hy^5) &= 2\pi^3 \D_k^{7.5}
		(\D_\ell/\D_k^2)^{2.5} C^2  \zeta_k(2) \zeta_k(4)
		\zeta_\ell(3)/\zeta_k(3)  \label{eq:vol-475} 
	\end{align}
	The index $[\Gamma:\Lambda]$ of $\Lambda$ in its normalizer
	$\Gamma$ can be computed using the same method as in the proof
	of Proposition \ref{prop:index-2}. That the group
	$\AL_4/(\ell^\times)^4$ has order $4$ was already computed in
	\cite[\S 7.5]{BelEme}. We use again \cite[\S 4.2]{MohamSG} to analyse
	the behaviour at the ramified place $v = (\beta)$: for the
	uniformizer $\pi_w = \frac{\sqrt{\beta} + \beta}{2}$ of the
	place $w | v$ we get that $s = \pi_w \overline{\pi_w}^{-1}$ is
	an element of $\AL_4$ that acts nontrivially on the local Dynkin
	diagram $\Delta_v$ of $\G(k_v)$. As in the proof of Proposition
	\ref{prop:index-2} it follows that $[\Gamma:\Lambda] = 2$. From
	\eqref{eq:vol-475} we obtain the value $0.006094\dots$ as the
	hyperbolic covolume of $\Gamma$. That no other arithmetic group
	associated with $\ell/k$ has smaller covolume follows from
	\cite[\S 4.3]{BelEme} (since $\Lambda$ is of the form
	$\Lambda^\mathfrak{m}$; cf. \cite[\S 12.3]{EmePhD} for more
	details).

\end{proof}

\section{Proof of Theorem \ref{thm:Cox}}
\label{sec:proof-2}

Consider the vector space model $\mathbb R^{1,5}$ for $\Hy^5$ as above and represent 
a hyperbolic hyperplane $H=e^{\perp}$ 
by means of a space-like unit vector
$e\in \mathbb R^{1,5}$. A hyperbolic Coxeter polytope $P=\cap_{i\in I}H_i^{-}$ 
is the intersection of finitely many half-spaces (whose normal unit vectors are
directed outwards w.r.t. $P$ and) whose dihedral angles are submultiples of $\pi$.
The group $\Delta$ generated by the reflections with respect to the hyperplanes
$H_i, i\in I,$ is a discrete subgroup of $\Isom(\Hy^5)$.
If the cardinality of $I$ is small, a Coxeter polytope and its reflection group are best represented by
the \emph{Coxeter diagram} or by the \emph{Coxeter symbol}. 
To each limiting hyperplane $H_i$ of a Coxeter polytope $P$ corresponds a node $i$ in the
Coxeter diagram, and two nodes $i,j$ are connected by an edge of weight $p$ if 
the hyperplanes intersect under the (non-right) angle $\pi/p$. Notice that the weight $3$ will always be
omitted.
If two hyperplanes are orthogonal, their nodes are not connected. If they admit a common perpendicular
(of length $l$), their nodes are joined by a dashed edge (and the weight $l$ is usually omitted).
We extend the diagram description to arbitrary convex hyperbolic polytopes and 
associate with the dihedral angle $\alpha=\angle(H_i,H_j)$ an edge with weight
$\alpha$ connecting the nodes $i,j$. For the intermediate case
of \emph{quasi-Coxeter polytopes} whose dihedral
angles are rational multiples $p\pi/q$ of $\pi$, the edge weight will be $q/p$.
The Coxeter symbol is a bracketed expression encoding the form of the Coxeter
diagram in an abbreviated way. For example, $[p,q,r]$ is associated with a linear Coxeter diagram
with 3 edges of consecutive markings $p,q,r$. The Coxeter symbol $[3^{i,j,k}]$
denotes a group with Y-shaped Coxeter diagram
with strings of $i$, $j$ and $k$ edges emanating from a common node. However,
dashed edges are omitted leaving a connected graph. The Coxeter symbol can be extended
to the quasi-Coxeter case in an obvious way as well. 

We are particularly interested 
in the quasi-Coxeter groups $\Delta_i$ and the polytopes $P_i$ (see \S
1) as given in Table \ref{tab:examples}.
\begin{table}
	\centering
	\begin{tabular}{clcc}
		Group & Coxeter diagram & Coxeter symbol & Polytope \\[3pt]
		\hline\\[-5pt]
		$\Delta_0$  & $\zero$ & $[5,3,3,3,3]$ & $P_0$ \\[3pt]
		$\Delta_1$  & \hspace{-7pt} $\vcenter{\one}$ & $[5,3,3,3,3^{1,1}]$ & $P_1=DP_0$ \\[3pt]
		$\Delta_2$  & $\two$ & $[5,3,3,3,4]$ & $P_2$ \\[10pt]
	\end{tabular}		
	\caption{Three hyperbolic Coxeter groups and their 5-polytopes}
	\label{tab:examples}
\end{table}
In order to compute the volumes of $P_i$, we consider the 1-parameter sequence of compact 5-prisms with symbol
\begin{align}
	P(\alpha) &: \quad	[5,3,3,3,\alpha]
	\label{eq:prisms}
\end{align}
where $\alpha\in[\pi/4,2\pi/5]$. Geometrically, they are compactifications of 5-dimen\-sional orthoschemes
by cutting away the ultra-ideal principal vertices by the associated polar hyperplanes.
The sequence (\ref{eq:prisms}) contains the Coxeter polytopes $P_0=[5,3,3,3,3]$ and $P_2=[5,3,3,3,4]$
as well as the pseudo-Coxeter prism $[5,3,3,3,5/2]$. There is no closed volume formula 
for such polytopes known in terms of the dihedral angles. However, for certain non-compact limiting cases
and by means of scissors congruence techniques, exact volume expressions
could be derived \cite[\S 4.2]{Kel12}.
For example, 

\newfam\cyrfam
\font\tencyr=wncyr10
\def\cyr{\fam\cyrfam\tencyr}
\def\russianL{\mathop{\hbox{\cyr L}}}
\begin{align}
	\hbox{vol}_5([{5}/{2},3,3,5,{5}/{2}])=\frac{13\zeta(3)}{9600}+\frac{11}{1152}{\russianL}_3(\frac{\pi}{5})\,\,,	
	\label{eq:Cq}
\end{align}	
\begin{align}
	\hbox{vol}_5([5,3,3,{5}/{2},5])=-\frac{\zeta(3)}{4800}+\frac{11}{1152}{\russianL}_3(\frac{\pi}{5})\,\,,
	\label{eq:Cq2}
\end{align}
and finally,
\begin{align}
	\hbox{vol}_5(P({2\pi}/{5}))=\frac{1}{5}\left(\hbox{vol}_5([{5}/{2},3,3,5,{5}/{2}])-\hbox{vol}_5([5,3,3,{5}/{2},5])\right)=\frac{\zeta(3)}{3200}\,\,.
	\label{eq:prism0}
\end{align}
Here, 
\begin{align}
{\russianL}_3(\omega)=
{1\over4}\,\sum_{r=1}^{\infty}{\cos(2r\omega)\over r^3}
=\frac{1}{4}\,\zeta(3)-\int\limits_0^{\omega}\,{\russianL}_2(t) dt\,\,,\,\,\omega\in\mathbb R\,\,,
	\label{eq:loba3}
\end{align}
denotes the Lobachevsky trilogarithm function which is related to the real part of the classical polylogarithm 
$\hbox{Li}_k(z)=\sum_{r=1}^{\infty}z^r/r^k$  for $k=3$ and
$z=\exp(2i\omega)$ (see \cite[\S 4.1]{Kel12} and (\ref{eq:loba2})).

For the volume calculation of the prisms
$P_0$ and $P_2$, we apply the volume differential formula of L. Schl\"afli (see \cite{Mil}, for example)
with the reference value (\ref{eq:prism0}) in order to obtain the simple integral expression
\begin{align}
\hbox{vol}_5(P(\alpha))=
\frac{1}{4}\,\int\limits_{\alpha}^{2\pi/5}\hbox{vol}_3([5,3,\beta(t)])\,dt+\frac{\zeta(3)}{3200}
	\label{eq:differential}
\end{align}
with a compact tetrahedron $[5,3,\beta(t)]$ whose angle parameter $\beta(t)\in\,]0,\pi/2[$ is given by
\begin{align}
\beta(t)=\arctan{\sqrt{2-\cot^2t}}\,\,.
\label{eq:beta}
\end{align}
Put
\begin{align}
\theta(t)=\arctan\frac{\sqrt{1-4\sin^2\frac{\pi}{5}\sin^2\beta(t)}}{2\cos\frac{\pi}{5}\cos \beta(t)}\in\,]0,\frac{\pi}{2}[\,\,.
\label{eq:theta}
\end{align}
Then, the volume of the 3-dimensional orthoscheme face $[5,3,\beta(t)]$ as given by Lobachevsky's formula
(see \cite{Kel12}, (67), for example) equals
\begin{align}
\label{eq:vol3}
\hbox{vol}&_3(\,[5,3,\beta(t)]\,)=\frac{1}{4}\,\big\{\,{\russianL}_2(\frac{\pi}{5}+\theta(t))-
{\russianL}_2(\frac{\pi}{5}-\theta(t))- {\russianL}_2\left(\frac{\pi}{6}+\theta(t)
\right)+\\
\nonumber
&+{\russianL}_2\left(\frac{\pi}{6}{-}\theta(t)\right)+{\russianL}_2(\beta(t){+}\theta(t))-{\russianL}_2(\beta(t){-}\theta(t))+2{\russianL}_2\left(\frac{\pi}{2}{-}\theta(t)\right)\big\}\,\,,
\end{align}
where
\begin{align}
{\russianL}_2(\omega)=
{1\over2}\,\sum_{r=1}^{\infty}{\sin(2r\omega)\over r^2}
=-\int\limits_0^{\omega}\,\log\mid2\sin t\mid dt\,\,,\,\,\omega\in\mathbb R\,\,,
	\label{eq:loba2}
\end{align}
is Lobachevsky's function (in a slightly modified way).

The numerical approximation of the volumes of $P_0$ and $P_2$ can now be performed 
by implementing the data (\ref{eq:beta}), (\ref{eq:theta}) and
\eqref{eq:vol3} into the expression \eqref{eq:differential}. We obtain,
using the  functions \texttt{intnum} and \texttt{polylog} in \textsc{Pari/GP}, 
that the three volumes of $P_0$, $P_1$ and $P_2$ (in increasing order) are clearly less than
$2 \cdot 10^{-3}$. Since  the groups $\Delta_i$ ($i=0,1,2$) are known to
be arithmetic, it follows then from Proposition
\ref{prop:smallest-three} that their subgroups of index two $\Delta_i^+$
must coincide with the $\Gamma_i'$. This concludes the proof of Theorem
\ref{thm:Cox}.

\section{Remarks on the identification of volumes}
\label{sec:rmks-vol}

Although in the proof of Theorem \ref{thm:Cox} it suffices to use the rough estimate
$\vol(P_2) < 2 \cdot 10^{-3}$, the numerical approximations are much more precise. 
Namely, the equality $\vol(\Gamma'_i\bs \Hy^5) =
\vol(\Delta^+_i\bs \Hy^5)$, proved by Theorem \ref{thm:Cox}, yields for $i=0,2$:
\begin{align}
	\begin{split}
 \frac{9 \sqrt{5}^{15} \sqrt{11}^5}{2^{14} \pi^{15}} \zeta_{k_0}(2)
	\label{eq:eq-vol}
 \zeta_{k_0}(4) L_{\ell_0/k_0}(3) &= 2 \vol_5(P(\pi/3));\\
 \frac{9 \sqrt{5}^{15}}{2^3 \pi^{15}} \zeta_{k_0}(2)
 \zeta_{k_0}(4) L_{\ell_2/k_0}(3) &= 2 \vol_5(P(\pi/5)).
	\end{split}
\end{align}
Using \textsc{Pari/GP}, a computer checks within seconds that both sides
of each equation coincide up to 50 digits (the right hand side being
computed from \eqref{eq:differential} like in last step of \S
\ref{sec:proof-2}).

The equalities \eqref{eq:eq-vol} have also some arithmetic interest, due
the presence on the left hand side of the special value $L_{\ell/k_0}(3)$
(with $\ell = \ell_0$ or $\ell_2$). Since $k_0$ is totally real, it
follows from the Klingen-Siegel theorem (see \cite{Kling62}; cf. also
\cite[App. C]{MohamSG}) that
$\zeta_{k_0}(2)
\zeta_{k_0}(4)$ is up to a rational given by some power of $\pi$ divided
by $\sqrt{\D_{k_0}} = \sqrt{5}$. Thus, from \eqref{eq:eq-vol} the nontrivial part
$L_{\ell/k_0}(3)$ of $\vol(\Gamma_i' \bs \Hy^5)$ can be expressed by a
sum of integrals of Lobachevsky's functions. A related but much more
significant idea is the possibility, predicted by Zagier's conjecture, 
to express $L_{\ell/k_0}(3)$ as a sum of trilogarithms evaluated at 
integers of $k_0$. We refer to
\cite{ZagGan00} for more information on this subject.


\bibliographystyle{amsplain}
\bibliography{vol5}

\end{document}